\newtheorem{thm}{Theorem}[section]
\newtheorem{remark}[thm]{Remark}
\author{Fabio Silva Botelho \\ Departamento de Matemática \\ Universidade Federal de Santa Catarina, UFSC \\
Florian\'{o}polis, SC - Brazil}
\title{\bf  A duality principle for non-convex optimization in $\mathbb{R}^n$} 
\begin{document}
\maketitle

\abstract{This article develops a duality principle for a class of optimization problems in $\mathbb{R}^n$.
The results are obtained based on standard tools of convex analysis and on a well known result of Toland for D.C. optimization.
 Global sufficient optimality conditions are also presented as well as  relations between the critical points of the
 primal and dual formulations. Finally we formally prove there is no duality gap between the primal and dual formulations in a local extremal context.}

\section{Introduction} Consider a function $J:\mathbb{R}^n \rightarrow \mathbb{R}$ defined by
$$J(x)=-G_1(x)+G_2(x,\mathbf{0}),$$
where
$$G_1(x)=-\frac{x^TAx}{2}+\frac{K}{2} x^T x-f^Tx$$
and
$$G_2(x,v)=\sum_{j=1}^N \frac{\gamma_j}{2} \left(\frac{x^T B_jx}{2}+c_j +v_j\right)^2+\frac{K}{2} x^Tx,$$
and where
$x \in \mathbb{R}^n$, $v \in \mathbb{R}^N$, $A$ is a $n \times n$ real symmetric matrix, $B_j$ is a $n \times n$ real symmetric matrix and $c_j,\gamma_j \in \mathbb{R}$, where $\gamma_j>0,$
$\forall j \in \{1,\ldots,N\}.$

Finally, $f \in \mathbb{R}^n$ as well.

Observe that $$J(x)=\frac{x^TAx}{2}+\sum_{j=1}^N \frac{\gamma_j}{2} \left(\frac{x^T B_jx}{2}+c_j\right)^2+f^Tx.$$

We shall develop a duality principle which has no restriction concerning $n$ and $N$, so that it includes the case $n \neq N.$

Also, we establish a relation between the corresponding critical points of the primal and dual formulations.

The main result is established through an extension of a Toland result found in \cite{12}.

Indeed, we must emphasize our work is a kind of extension and continuation of the original works of Bielski and Telega \cite{2900,85} combined with the work of Toland \cite{12}. The technical details follow in some extent the results in \cite{12a}. Anyway, we highlight once more our work in some sense complements the  results
in \cite{2900,85} but now applied to a $\mathbb{R}^n$ simpler context.

Similar problems have been addressed in \cite{17,9}, among others.

\section{The main result}

We start this section with a remark.

\begin{remark} About the notation we denote the canonical basis of $\mathbb{R}^n$ by $$\{e_1, \ldots, e_n\}$$ and we recall that in general $A^T$ denotes the transpose of the matrix $A$. For a  $n \times n$ matrix $A$ we denote $A>\mathbf{0}$ if $A$ is positive definite. Finally, $I_d$ denotes the identity matrix $n \times n$ and by $\{\delta_{ij}\}$ we denote the standard  $N \times N$ Kronecker delta, that is,
\begin{equation} \delta_{ij}=\left\{
\begin{array}{lr}
 1, & \text{ if } i=j
 \\
 0, & \text{ otherwise},  \end{array} \right.\end{equation}
 $\forall i,j \in \{1,\ldots,N\}.$
\end{remark}

Our main result is summarized by the following theorem.

\begin{thm} Let $J:\mathbb{R}^n \rightarrow \mathbb{R}$ be defined by
\begin{eqnarray}J(x)&=&\frac{x^TAx}{2}+\sum_{j=1}^N \frac{\gamma_j}{2} \left(\frac{x^T B_j x}{2}+c_j\right)^2+f^Tx
\nonumber \\ &=& -G_1(x)+G_2(x,\mathbf{0})\end{eqnarray}
where
$$G_1(x)=-\frac{x^TAx}{2}+\frac{K}{2} x^T x-f^Tx$$
and
$$G_2(x,v)=\sum_{j=1}^N \frac{\gamma_j}{2} \left(\frac{x^T B_jx}{2}+c_j +v_j\right)^2+\frac{K}{2} x^Tx.$$

Assume $A$ is a $n \times n$ symmetric matrix and $B_j$ are $n \times n$  symmetric  matrices $\forall j \in \{1,\dots,N\}$ such that $$J(x) \rightarrow +\infty$$ as $|x| \rightarrow \infty,$ and
$K>0$ is such that $K I_d > A$.

Define also $G^*_1:\mathbb{R}^n \rightarrow \mathbb{R}$ by
\begin{eqnarray}
G_1^*(v^*)&=&\sup_{x \in \mathbb{R}^n} \{(v^*)^Tx-G_1(x)\}
\nonumber \\ &=& \frac{1}{2}(v^*+f)^T (K I_d-A)^{-1}(v^*+f)\end{eqnarray}
and
$G_2^*: \mathbb{R}^n \times C^* \rightarrow \mathbb{R}$ by
\begin{eqnarray}
G_2^*(v^*,v_0^*)&=& \sup_{(x,v) \in \mathbb{R}^n\times \mathbb{R}^N}\{(v^*)^T x+(v_0^*)^Tv-G_2(x,v)\} \nonumber \\
&=& \frac{1}{2}(v^*)^T\left(\sum_{j=1}^N (v_0^*)_j B_j+K I_d\right)^{-1} v^*+\sum_{j=1}^N \frac{1}{2\gamma_j} (v_0^*)_j^2
\nonumber \\ &&-\sum_{j=1}^Nc_j(v_0^*)_j
\end{eqnarray}  where
$$C^*=\left\{v_0^* \in \mathbb{R}^N\;:\; \sum_{j=1}^N (v_0^*)_jB_j+K I_d>\mathbf{0}\right\}.$$
Moreover,  define
$$B^*=\left\{v_0^* \in \mathbb{R}^N\;:\; A+\sum_{j=1}^N (v_0^*)_jB_j>\mathbf{0}\right\}$$
and
$$A^*=B^* \cap C^*.$$

At this point we denote $$J^*(v^*,v_0^*)=G_1(v^*)-G_2^*(v^*,v_0^*),$$ and define
$$\tilde{J}^*(v^*)=\sup_{v_0^* \in C^*} J^*(v^*,v_0^*).$$

Assume $x_0 \in \mathbb{R}^n$ is such that $\delta J(x_0)=\mathbf{0}$ and
define
$$(\hat{v}_0^*)_j=\gamma_j\left(\frac{x_0^TB_jx_0}{2}+c_j\right),$$

$$\hat{v}^*=\sum_{j=1}^N (\hat{v}_0^*)_j B_j x_0+K x_0,$$
$$H_3=P_1\;\overline{E}\;P_2,$$
$$\alpha\equiv(\alpha)_{n \times n}=(I_d-H_3)D-I_d,$$
and $$\alpha_1= -\left(\sum_{p=1}^N (\hat{v}_0^*)_p B_p+K I_d\right)^{-1}(\alpha)\left(\sum_{p=1}^N (\hat{v}_0^*)_p B_p+K I_d\right).$$
where  \begin{equation} P_1=\left[
\begin{array}{cccc}
 B_1x_0 & B_2 x_0 & \cdots & B_Nx_0  \end{array} \right]_{n \times N}\end{equation}
and
\begin{equation} P_2=\left[
\begin{array}{c}
 x_0^T B_1\left(\sum_{p=1}^N (\hat{v}_0^*)_p B_p+K I_d\right)^{-1}\\
 x_0^T B_2\left(\sum_{p=1}^N (\hat{v}_0^*)_p B_p+K I_d\right)^{-1}\\
 \vdots \\
 x_0^T B_N\left(\sum_{p=1}^N (\hat{v}_0^*)_p B_p+K I_d\right)^{-1}   \end{array} \right]_{N \times n}\end{equation}
where
$$E= \{E_{l\eta}\}=\left[ \gamma_l \left(x_0^T B_l \left(\sum_{p=1}^N (\hat{v}_0^*)_p B_p+K I_d\right)^{-1} B_\eta x_0\right)+\delta_{l\eta}\right]_{N \times N}$$
and $$\overline{E}=\{\overline{E}_{l\eta}\}=\{E_{l\eta}\}^{-1}.$$

Furthermore,
$$D=  \hat{B} \left(\sum_{p=1}^N (\hat{v}_0^*)_p B_p+K I_d\right)^{-1}+I_d$$ where $$\hat{B}_{n\times n}=\{\hat{B}_{jk}\}=\left\{ \sum_{l=1}^N\sum_{s,q=1}^n \gamma_l\;(x_0)_s(B_l)_{js} (B_l)_{qk}(x_0)_q\right\}.$$

Under such assumptions and notation, we have,
\begin{enumerate}
\item If $\delta^2J(x_0)>\mathbf{0}$, $\delta^2 J(x_0)+(K I_d-A)(\alpha_1)> \mathbf{0}$ and $\hat{v}_0^* \in C^*$, then
$$\delta \tilde{J}(\hat{v}^*)=\mathbf{0},$$ and
$$\delta^2 \tilde{J}(\hat{v}^*)>\mathbf{0},$$ so that there exist $r>0$ and $r_1>0$ such that
\begin{eqnarray}
J(x_0)&=& \inf_{x \in B_r(x_0)} J(x) \nonumber \\ &=& \inf_{v^* \in B_{r_1}(\hat{v}^*)} \tilde{J}^*(v^*)\nonumber \\ &=&
\tilde{J}^*(\hat{v}^*)\nonumber \\ &=& \inf_{v^* \in B_{r_1}(\hat{v}^*)} \sup_{v^*_0 \in C^*} J^*(v^*,v_0^*)
\nonumber \\ &=& J^*(\hat{v}^*,\hat{v}_0^*).
\end{eqnarray}
\item If $\hat{v}_0^* \in A^*$ so that $$\delta^2J(x_0)>\mathbf{0},$$ define
$$J^*_2(v^*)=\sup_{ v_0^* \in A^*} J^*(v^*,v_0^*).$$
Thus in such a case, we have
$$\delta J_2^*(\hat{v}^*)=\mathbf{0},$$ $$\delta^2 J_2^*(\hat{v}^*)> \mathbf{0}$$ and
\begin{eqnarray}
J(x_0)&=& \inf_{x \in \mathbb{R}^n} J(x) \nonumber \\ &=& \inf_{v^* \in \mathbb{R}^n}J_2^*(v^*)\nonumber \\ &=&
J_2^*(\hat{v}^*)\nonumber \\ &=& \inf_{v^* \in \mathbb{R}^n}\sup_{v^*_0 \in A^*} J^*(v^*,v_0^*)
\nonumber \\ &=& J^*(\hat{v}^*,\hat{v}_0^*).
\end{eqnarray}
\item If  $\delta^2J(x_0)< \mathbf{0}$, $\delta^2 J(x_0)+(K I_d-A)(\alpha_1)<\mathbf{0}$ and $\hat{v}_0^* \in C^*$ then
$$\delta \tilde{J}(\hat{v}^*)=\mathbf{0}$$ and $$\delta^2\tilde{J}^*(\hat{v}^*) < \mathbf{0},$$ so that
there exist $r>0$ and $r_1>0$ such that
\begin{eqnarray}
J(x_0)&=& \sup_{x \in B_r(x_0)} J(x) \nonumber \\ &=& \sup_{v^* \in B_{r_1}(\hat{v}^*)} \tilde{J}^*(v^*)\nonumber \\ &=&
\tilde{J}^*(\hat{v}^*)\nonumber \\ &=& \sup_{v^* \in B_{r_1}(\hat{v}^*)}\sup_{v^*_0 \in C^*} J^*(v^*,v_0^*)
\nonumber \\ &=& J^*(\hat{v}^*,\hat{v}_0^*).
\end{eqnarray}

\end{enumerate}
\end{thm}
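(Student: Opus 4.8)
The plan is to read $J$ through the Toland duality associated with the decomposition $J=-G_1+G_2(\cdot,\mathbf 0)$. Since $K I_d>A$, the function $G_1$ is strictly convex, so $-G_1(x)=\inf_{v^*}\{G_1^*(v^*)-(v^*)^Tx\}$, and substituting this into $J$ and exchanging the infima yields the Toland dual $\inf_x J(x)=\inf_{v^*}\{G_1^*(v^*)-(G_2(\cdot,\mathbf 0))^*(v^*)\}$. The auxiliary variable $v_0^*$ serves to linearize the nonconvex composition: minimizing $G_2^*(v^*,v_0^*)$ over $v_0^*\in C^*$ reproduces $(G_2(\cdot,\mathbf 0))^*$, so that $\tilde J^*(v^*)=G_1^*(v^*)-\inf_{v_0^*\in C^*}G_2^*(v^*,v_0^*)$ is exactly the Toland dual functional. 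I would first record the first-order dictionary at the prescribed point. A direct differentiation gives $\nabla G_1(x_0)=(K I_d-A)x_0-f=\hat v^*$ together with $\nabla_xG_2(x_0,\mathbf 0)=\hat v^*$ and $\nabla_vG_2(x_0,\mathbf 0)=\hat v_0^*$, so the stationarity $\delta J(x_0)=\mathbf 0$ is precisely the equality of these gradients. Writing $M=\sum_{p}(\hat v_0^*)_pB_p+K I_d$, one checks $M x_0=\hat v^*$, hence $\nabla_{v^*}G_1^*(\hat v^*)=\nabla_{v^*}G_2^*(\hat v^*,\hat v_0^*)=x_0$ and $\nabla_{v_0^*}G_2^*(\hat v^*,\hat v_0^*)=\mathbf 0$; the first identity yields $\delta\tilde J^*(\hat v^*)=\mathbf 0$ via the envelope theorem, and the second says $\hat v_0^*$ is the inner critical point.

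Second, I would establish the absence of a duality gap at the point. When $\hat v_0^*\in C^*$ the supremum defining $G_2^*(\hat v^*,\hat v_0^*)$ is attained, after exact optimization in $v$ and then in $x$, precisely at $(x_0,\mathbf 0)$; together with the corresponding attainment for $G_1^*(\hat v^*)$ this gives the Fenchel equalities $G_1^*(\hat v^*)=(\hat v^*)^Tx_0-G_1(x_0)$ and $G_2^*(\hat v^*,\hat v_0^*)=(\hat v^*)^Tx_0-G_2(x_0,\mathbf 0)$, whence $J^*(\hat v^*,\hat v_0^*)=-G_1(x_0)+G_2(x_0,\mathbf 0)=J(x_0)$. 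For item $2$, when $\hat v_0^*\in A^*=B^*\cap C^*$ I would show that $J^*(v^*,v_0^*)$ is convex in $v^*$ and concave in $v_0^*$ on $\mathbb R^n\times A^*$: convexity in $v^*$ reduces to $(K I_d-A)^{-1}-M^{-1}>\mathbf 0$, which is equivalent to $A+\sum_p(v_0^*)_pB_p>\mathbf 0$, i.e.\ to $v_0^*\in B^*$, while concavity in $v_0^*$ follows from the operator convexity of $X\mapsto (v^*)^TX^{-1}v^*$ on positive definite $X$. Consequently $(\hat v^*,\hat v_0^*)$, being a critical point, is a genuine saddle point, so $J_2^*(\hat v^*)=\inf_{v^*}J_2^*(v^*)=J(x_0)$; weak duality, namely $\inf_xJ(x)=\inf_{v^*}\{G_1^*(v^*)-(G_2(\cdot,\mathbf 0))^*(v^*)\}\ge\inf_{v^*}J_2^*(v^*)$ together with the trivial reverse inequality, upgrades this to $J(x_0)=\inf_{x}J(x)$ (here $\hat v_0^*\in B^*$ forces $\delta^2J(x_0)=A+\sum_p(\hat v_0^*)_pB_p+\hat B>\mathbf 0$, since $\hat B\ge\mathbf 0$).

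The remaining and most delicate point is the second-order correspondence in items $1$ and $3$. I would compute the three Hessian blocks of $G_2^*$ at $(\hat v^*,\hat v_0^*)$: $\nabla^2_{v^*v^*}G_2^*=M^{-1}$, the mixed block has columns $-M^{-1}B_kx_0$ (so it equals $-M^{-1}P_1$, with $P_1^TM^{-1}=P_2$), and the $v_0^*$-block equals, up to the diagonal scaling by the $\gamma_l$, the matrix $E$; this is where $E$, $\overline E$, $P_1$, $P_2$, $D$ and $\hat B$ enter. Forming the marginal $\phi(v^*)=\inf_{v_0^*}G_2^*(v^*,v_0^*)$ and using the Schur-complement formula for the Hessian of a partial minimization, I would obtain $\nabla^2\phi(\hat v^*)=M^{-1}(I_d-H_3)$, and therefore, using $\nabla^2G_1^*=(K I_d-A)^{-1}$ and the definitions of $D$, $\alpha$ and $\alpha_1$, the identity
\begin{equation}
\delta^2\tilde J^*(\hat v^*)=(K I_d-A)^{-1}-(I_d-\alpha_1)\bigl(\delta^2J(x_0)+(K I_d-A)\bigr)^{-1},
\end{equation}
equivalently
\begin{equation}
(K I_d-A)\,\delta^2\tilde J^*(\hat v^*)\,\bigl(\delta^2J(x_0)+(K I_d-A)\bigr)=\delta^2J(x_0)+(K I_d-A)\alpha_1.
\end{equation}
From this the conclusions follow: under $\delta^2J(x_0)>\mathbf 0$ the factor $\delta^2J(x_0)+(K I_d-A)$ is positive definite, so the sign of $\delta^2\tilde J^*(\hat v^*)$ is governed by that of $\delta^2J(x_0)+(K I_d-A)\alpha_1$, giving $\delta^2\tilde J^*(\hat v^*)>\mathbf 0$ in item $1$ and, with all inequalities reversed, $\delta^2\tilde J^*(\hat v^*)<\mathbf 0$ in item $3$. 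Once the definiteness of $\delta^2\tilde J^*(\hat v^*)$ is known, $\hat v^*$ is a strict local extremum of $\tilde J^*$, $x_0$ is the matching strict local extremum of $J$, and the pointwise value equality of the previous paragraph closes the chain of equalities on $B_r(x_0)$ and $B_{r_1}(\hat v^*)$. The main obstacle I anticipate is precisely the bookkeeping in deriving the displayed matrix identity, namely tracking the $\gamma_l$ through the Schur complement and verifying that the non-symmetric-looking triple product genuinely transfers positive (respectively negative) definiteness; this is where Sylvester's law of inertia, applied to the congruence hidden in the identity together with $K I_d-A>\mathbf 0$ and $\delta^2J(x_0)+(K I_d-A)>\mathbf 0$, must be invoked with care.
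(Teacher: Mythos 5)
Your proposal is correct and follows essentially the same route as the paper's proof: the same first-order/envelope argument and Fenchel attainment equalities at $(\hat v^*,\hat v_0^*)$ giving $\delta\tilde J^*(\hat v^*)=\mathbf 0$ and $J^*(\hat v^*,\hat v_0^*)=J(x_0)$, the same Hessian computation (your Schur-complement formula is precisely the paper's implicit differentiation of the inner stationarity condition, and your displayed identity is algebraically equivalent to the paper's $\delta^2\tilde J^*(\hat v^*)\,D=(KI_d-A)^{-1}\bigl(\delta^2J(x_0)+(KI_d-A)\alpha_1\bigr)\bigl(\sum_{p=1}^N(\hat v_0^*)_pB_p+KI_d\bigr)^{-1}$, since $DM=\delta^2J(x_0)+KI_d-A$ with $M=\sum_{p=1}^N(\hat v_0^*)_pB_p+KI_d$), and the same convexity-of-$J_2^*$-plus-Toland treatment of item 2. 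The minor deviations are to your credit rather than gaps: your weak-duality chain via $G_2^*(v^*,v_0^*)\ge (G_2(\cdot,\mathbf 0))^*(v^*)$ replaces the paper's looser appeal to a min--max exchange, and your explicit caution that Sylvester's law of inertia must be invoked for the non-symmetric triple product addresses exactly the point the paper glosses over by asserting (incorrectly in general) that $D=\hat B\,M^{-1}+I_d$ is symmetric, whereas the correct argument is that $\delta^2\tilde J^*(\hat v^*)$, being symmetric and similar under congruence by $(KI_d-A)^{1/2}$ to a matrix with the same spectrum as a product of positive definite factors, must itself be positive (respectively negative) definite.
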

\begin{proof} From $\delta J(x_0)=\mathbf{0}$ we obtain
$$Ax_0+\sum_{j=1}^N \gamma_j\left(\frac{x_0^TB_j x_0}{2}+c_j\right) B_j x_0+f=\mathbf{0}.$$

Hence
\begin{eqnarray}\label{r0}-Ax_0+K x_0 -f &=& \sum_{j=1}^N \gamma_j\left(\frac{x_0^TB_j x_0}{2}+c_j\right) B_j x_0 +K x_0 \nonumber \\ &=&
\sum_{j=1}^N (\hat{v}_0^*)_jB_j x_0+K x_0 \nonumber \\ &=& \hat{v}^*.
\end{eqnarray}
Thus,
$$x_0=(K I_d-A)^{-1}(\hat{v}^*+f),$$ so that
\begin{eqnarray}
(K I_d-A)^{-1}(\hat{v}^*+f)-\left(\sum_{j=1}^N (\hat{v}_0^*)_j B_j +K I_d\right)^{-1}\hat{v}^*&=& x_0-x_0 \nonumber \\ &=& \mathbf{0},
\end{eqnarray}
and therefore
$$\frac{\partial J^*(\hat{v}^*,\hat{v}_0^*)}{\partial v^*}=\mathbf{0}.$$

From this and and the implicit function theorem, we get
\begin{equation}\label{r1}\frac{\partial \tilde{J}^*(\hat{v}^*)}{\partial v^*}=\frac{\partial J^*(\hat{v}^*,\hat{v}_0^*)}{\partial v^*}
+\sum_{j=1}^N\frac{\partial J^*(\hat{v}^*,\hat{v}_0^*)}{\partial (v^*_0)_j}\frac{\partial (\hat{v}_0^*)_j}{\partial v^*}.\end{equation}
However, from
$$(\hat{v}_0^*)_j=\gamma_j\left(\frac{x_0^T B_j x_0}{2}+c_j\right),$$ we have
\begin{eqnarray}
0 &=&  -\frac{(\hat{v}_0^*)_j}{\gamma_j}+\frac{x_0^T B_j x_0}{2}+c_j \nonumber \\ &=& \frac{\partial J^*(\hat{v}^*,\hat{v}_0^*)}{\partial (v^*_0)_j},
\; \forall j \in \{1,\ldots,N\},
\end{eqnarray}
so that from (\ref{r1}), we obtain
\begin{equation}\frac{\partial \tilde{J}^*(\hat{v}^*)}{\partial v^*}=\frac{\partial J^*(\hat{v}^*,\hat{v}_0^*)}{\partial v^*}
= \mathbf{0}\end{equation}
Hence, we may denote
$$\delta \tilde{J}^*(\hat{v}^*)=\mathbf{0}.$$
On the other hand from (\ref{r0}), we have
\begin{eqnarray}
G_1^*(\hat{v}^*)&=&(\hat{v}^*)^T x_0-\frac{K}{2}x_0^Tx_0+\frac{1}{2} x_0^T A x_0 + f^T x_0 \nonumber \\ &=& (\hat{v}^*)^T x_0-G_1(x_0),
\end{eqnarray}
and
$$ G_2^*(\hat{v}^*,\hat{v}_0^*)= (\hat{v}^*)^Tx_0+(\hat{v}^*_0)^T \mathbf{0}-G_2(x_0,\mathbf{0}).$$
Therefore
\begin{eqnarray}
\tilde{J}^*(\hat{v}^*)&=& J^*(\hat{v}^*,\hat{v}_0^*) \nonumber \\ &=& G_1^*(\hat{v}^*)-G_2^*(\hat{v}^*,\hat{v}_0^*) \nonumber \\
&=& -G_1(x_0)+G_2(x_0,\mathbf{0}) \nonumber \\ &=& J(x_0).
\end{eqnarray}

Observe also that
\begin{eqnarray}
\delta^2 \tilde{J}^*(\hat{v}^*)&=& \left\{ \frac{\partial^2 \tilde{J}^*(\hat{v}^*)}{\partial v^*_j \partial v^*_k}\right\} \nonumber \\ &=&
\left\{ \frac{\partial^2 J^*(\hat{v}^*, \hat{v}_0^*)}{\partial v^*_j \partial v^*_k} + \sum_{l=1}^N\frac{\partial^2 J^*(\hat{v}^*, \hat{v}_0^*)}{\partial v^*_j \partial (v^*_0)_l}
\frac{\partial (\hat{v}_0^*)_l}{\partial v^*_k} \right\},\end{eqnarray}
where $\hat{v}_0^*$ is such that
\begin{eqnarray}
&&\frac{\partial J^*(\hat{v}^*,\hat{v}_0^*)}{\partial (v_0^*)_l}\nonumber \\ &=&
\frac{1}{2}(v^*)^T\left(\sum_{j=1}^N (\hat{v}_0^*)_jB_j+K I_d\right)^{-1} B_l\left(\sum_{j=1}^N (\hat{v}_0^*)_jB_j+K I_d\right)^{-1} (v^*) \nonumber \\ &&
-\frac{(v_0^*)_l}{\gamma_l}+c_l \nonumber \\ &=& 0.
\end{eqnarray}

Taking the variation of this last equation in $v^*_k$, we get
\begin{eqnarray}
&&e_k^T\left(\sum_{j=1}^N (\hat{v}_0^*)_jB_j+K I_d\right)^{-1} B_l x_0 \nonumber \\ &&- \sum_{\eta=1}^N\left(x_0^TB_l \left(\sum_{j=1}^N (\hat{v}_0^*)_jB_j+K I_d\right)^{-1} B_\eta x_0 \;  \frac{\partial (\hat{v}_0^*)_\eta}{\partial v^*_k}
\right)
\nonumber \\ && -\frac{1}{\gamma_l}\frac{\partial (\hat{v}_0^*)_l}{\partial v^*_k}\nonumber \\ &=& 0
\end{eqnarray}

From this, denoting
$$\frac{-1}{\gamma_l} \frac{\partial (\hat{v}_0^*)_l}{\partial v^*_k}=\frac{-1}{\gamma_l}
\sum_{\eta=1}^N\delta_{l\eta} \frac{\partial (\hat{v}_0^*)_\eta}{\partial v^*_k}$$ we obtain

\begin{eqnarray}
&&\left\{\frac{\partial (\hat{v}_0^*)_l}{\partial v^*_k}\right\} \nonumber \\ &=&
\left[ x_0^T B_l\left(\sum_{j=1}^N (\hat{v}_0^*)_jB_j+K I_d\right)^{-1} B_\eta x_0+\frac{1}{\gamma_l} \delta_{l\eta} \right]^{-1} \nonumber \\ && \times
\left[ x_0^T B_\eta \left(\sum_{j=1}^N (\hat{v}_0^*)_jB_j+K I_d\right)^{-1} e_k \right] \nonumber \\ &=& \overline{E} P_2. \end{eqnarray}
Also
\begin{eqnarray}&& \left\{\frac{\partial^2 J^*(\hat{v}^*,\hat{v}_0^*)}{\partial v_j^*\partial (v_0^*)_l}\right\} \nonumber \\ &=&
\left[ e_j^T \left(\sum_{p=1}^N (\hat{v}_0^*)_pB_p+K I_d\right)^{-1} B_l x_0\right]_{n \times N}
 \nonumber \\ &=&  \left(\sum_{p=1}^N (\hat{v}_0^*)_p B_p+K I_d\right)^{-1}\left[
\begin{array}{cccc}
 B_1x_0 & B_2 x_0 & \cdots & B_Nx_0  \end{array} \right]_{n \times N} \nonumber \\ &=&  \left(\sum_{p=1}^N (\hat{v}_0^*)_p B_p+K I_d\right)^{-1} P_1,
 \end{eqnarray}
 so that
 \begin{eqnarray}&&\left\{\sum_{l=1}^N\frac{\partial^2 J^*(\hat{v}^*,\hat{v}_0^*)}{\partial v_j^*\partial (v_0^*)_l} \frac{\partial (\hat{v}_0^*)_l}{\partial v^*_k}\right\}
 \nonumber\\ &=&
 \left(\sum_{p=1}^N (\hat{v}_0^*)_p B_p+K I_d\right)^{-1} P_1 \overline{E} P_2 \nonumber \\ &=&
 \left(\sum_{p=1}^N (\hat{v}_0^*)_p B_p+K I_d\right)^{-1} H_3.
 \end{eqnarray}

Therefore
\begin{eqnarray}
\delta^2 \tilde{J}^*(\hat{v}^*)&=& \left\{ \frac{\partial^2 \tilde{J}^*(\hat{v}^*)}{\partial v^*_j \partial v^*_k}\right\} \nonumber \\ &=&
\left\{ \frac{\partial^2 J^*(\hat{v}^*, \hat{v}_0^*)}{\partial v^*_j \partial v^*_k} + \sum_{l=1}^N\frac{\partial^2 J^*(\hat{v}^*, \hat{v}_0^*)}{\partial v^*_j \partial (v^*_0)_l}
\frac{\partial (\hat{v}_0^*)_l}{\partial v^*_k} \right\}
\nonumber \\ &=& -\left(\sum_{p=1}^N (\hat{v}_0^*)_p B_p+K I_d\right)^{-1}+
\left(K I_d-A\right)^{-1} \nonumber \\ &&+\left(\sum_{p=1}^N (\hat{v}_0^*)_p B_p+K I_d\right)^{-1} H_3. \end{eqnarray}
Therefore, recalling that $$D=\hat{B} \left(\sum_{p=1}^N (\hat{v}_0^*)_p B_p+K I_d\right)^{-1} +I_d$$ where $$\hat{B}_{n\times n}=\{\hat{B}_{jk}\}=\left\{ \sum_{l=1}^N\sum_{s,q=1}^n \gamma_l\;(x_0)_s(B_l)_{js} (B_l)_{qk}(x_0)_q\right\},$$ we may write
\begin{eqnarray}\label{q30}
&& \delta^2 \tilde{J}^*(\hat{v}^*)\;D  \nonumber \\ &=&  -\left(\sum_{p=1}^N (\hat{v}_0^*)_p B_p+K I_d\right)^{-1}(I_d-H_3)D\ \nonumber \\ && +
\left(K I_d-A\right)^{-1} D \nonumber \\ &=&-\left(\sum_{p=1}^N (\hat{v}_0^*)_p B_p+K I_d\right)^{-1}((I_d-H_3)D-I_d+I_d)\ \nonumber \\ && +
\left(K I_d-A\right)^{-1} D \nonumber \\ &=&-\left(\sum_{p=1}^N (\hat{v}_0^*)_p B_p+K I_d\right)^{-1}(I_d+\alpha)\ \nonumber \\ && +
\left(K I_d-A\right)^{-1} D \nonumber \\ &=&(K I_d-A)^{-1}\left(-(K I_d- A)(I_d-\alpha_1)
\right.\ \nonumber \\ && \left.+ \hat{B}+\sum_{p=1}^N (\hat{v}_0^*)_p B_p+K I_d \right)\left(\sum_{p=1}^N (\hat{v}_0^*)_p B_p+K I_d\right)^{-1}
\end{eqnarray}
 Therefore, denoting also $$H_1=(K I_d-A)^{-1},$$ $$H_2= \left(\sum_{p=1}^N (\hat{v}_0^*)_p B_p+K I_d\right)^{-1},$$ we have
\begin{eqnarray}
 && \delta^2 \tilde{J}^*(\hat{v}^*)\;D \nonumber \\ &=&H_1\left( A(I_d-\alpha_1)+\hat{B}+\sum_{\eta=1}^N (\hat{v}_0^*)_\eta B_\eta+ K I_d(\alpha_1)\right)H_2
\nonumber \\ &=& H_1(\delta^2 J(x_0)+(K I_d-A)(\alpha_1))H_2.\end{eqnarray}
Since  $D$, $H_1$ and $H_2$ are symmetric positive definite matrices, assuming  $\delta^2 J(u_0)>\mathbf{0}$ and $\delta^2 J(x_0)+(K I_d-A)(\alpha_1)>\mathbf{0}$, we have
$$\delta^2 \tilde{J}(\hat{v}^*)>\mathbf{0},$$ so that
there exist $r>0$ and $r_1>0$ such that
\begin{eqnarray}
J(x_0)&=& \inf_{x \in B_r(x_0)} J(x) \nonumber \\ &=& \inf_{v^* \in B_{r_1}(\hat{v}^*)} \tilde{J}(v^*)\nonumber \\ &=&
\tilde{J}(\hat{v}^*)\nonumber \\ &=& \inf_{v^* \in B_{r_1}(\hat{v}^*)} \sup_{v^* \in C^*} J^*(v^*,v_0^*)
\nonumber \\ &=& J^*(\hat{v}^*,\hat{v}_0^*).
\end{eqnarray}
Assume now $\hat{v}_0^* \in A^*$ so that $$\delta^2J(x_0)>\mathbf{0}.$$

Observe that if $v_0^* \in A^*$, then
$$J^*(v^*,v_0^*)=G_1^*(v^*)-G_2^*(v^*,v_0^*)$$ is such that
$$\frac{\partial J^*(v^*,v_0^*)}{\partial (v^*)^2}=(KI_d-A)^{-1}-\left(\sum_{j=1}^N (v_0^*)_jB_j+KI_d\right)^{-1}>\mathbf{0},$$
so that defining $$J^*_2(v^*)=\sup_{ v_0^* \in A^*} J^*(v^*,v_0^*)$$
 we have that $J_2^*$ is convex as the supremum of a family of convex functions.

Similarly as above, we may obtain
$$\delta J_2^*(\hat{v}^*)=\mathbf{0}$$ and $$J_2^*(\hat{v}^*)=J(x_0)=J^*(\hat{v}^*,\hat{v}_0^*).$$
From this, since $J_2^*$ is convex, from the min-max theorem and from the general result in Toland \cite{12}, we may infer that
\begin{eqnarray}
J_2^*(\hat{v}^*)&=& \inf_{ v^* \in \mathbb{R}^n} J_2^*(v^*) \nonumber \\ &=&\inf_{ v^* \in \mathbb{R}^n}\sup_{v^*_0 \in A^*} J^*(v^*,v_0^*)
\nonumber \\ &=& \sup_{v^*_0 \in A^*} \inf_{v^* \in \mathbb{R}^n} J^*(v^*,v_0^*)
\nonumber \\ &\leq& \sup_{v_0^* \in A^*} \left\{-G_1(x)+\frac{K}{2}x^Tx+\sum_{j=1}^N\left((v_0^*)_j\left(\frac{x^TB_j x}{2}+c_j\right)-  \frac{(v^*_0)_j^2}{2\gamma_j}\right)\right\}
\nonumber \\ &\leq& \sup_{v_0^* \in \mathbb{R}^N} \left\{-G_1(x)+\frac{K}{2}x^Tx+\sum_{j=1}^N\left((v_0^*)_j\left(\frac{x^TB_j x}{2}+c_j\right)-  \frac{(v^*_0)_j^2}{2\gamma_j}\right)\right\}
\nonumber \\ &=& -G_1(x)+G_2(x,\mathbf{0}) \nonumber \\ &=& J(x),\; \forall x \in \mathbb{R}^n.\end{eqnarray}
Hence
$$\inf_{x \in \mathbb{R}^n} J(x) \geq J_2^*(\hat{v}^*)=J(x_0),$$
so that
\begin{eqnarray}
J(x_0)&=& \inf_{x \in \mathbb{R}^n} J(x) \nonumber \\ &=& \inf_{v^* \in \mathbb{R}^n}J_2(v^*)\nonumber \\ &=&
J_2(\hat{v}^*)\nonumber \\ &=& \inf_{v^* \in \mathbb{R}^n}\sup_{v^*_0 \in A^*} J^*(v^*,v_0^*)
\nonumber \\ &=& J^*(\hat{v}^*,\hat{v}_0^*).
\end{eqnarray}
Finally, the proof of third item  is similar to that of the first one.

 This would complete the proof.
\end{proof}
\begin{remark} For the special case in which $n=N=1$ we obtain $\alpha_1=0.$\end{remark}
\begin{remark} We may obtain an even more interesting result if we consider a more general case in which $K$ is a symmetric matrix
$n \times n$. Specifically for the case $$K=K I_d=A+\varepsilon I_d$$ we get
$$K I_d-A=\varepsilon I_d,$$ and in such a case
\begin{eqnarray}&&\delta^2 \tilde{J}^*(\hat{v}^*)\;D \nonumber \\ &=& H_1(\delta^2 J(x_0)+(K I_d-A)(\alpha_1))H_2
\nonumber \\ &=&H_1(\delta^2 J(x_0)+\varepsilon I_d(\alpha_1))H_2 \nonumber \\ &=&
H_1(\delta^2 J(x_0)+\mathcal{O}(\varepsilon) I_d)H_2\end{eqnarray}
so that we recover at least approximately a correspondence between $\delta^2 J(x_0)$ and $\delta^2 \tilde{J}^*(\hat{v}^*),$ up to considering the
sign of $H_2$ as well.

Observe that in this last context,
$$H_1=\frac{1}{\varepsilon} I_d$$ and $$H_2= \left(A+\sum_{p=1}^N (\hat{v}_0^*)_p B_p+\varepsilon I_d\right)^{-1}.$$
\end{remark}
\begin{remark} Let us now consider a dual functional proposed in the current literature (see \cite{9}, for example).
For the model addressed in this article, such a functional is expressed as
$$-J_1^*(v_0^*)= \frac{1}{2} f^T \left( \sum_{p=1}^N (v_0^*)_p B_p+A\right)^{-1} f + \sum_{p=1}^N \frac{(v_0^*)_p^2}{2\gamma_p}-\sum_{p=1}^N c_p (v_0^*)_p.$$

Taking the variation (in fact derivative) of such a functional in $(v_0^*)_j$, since the matrices in question are symmetric,  we obtain
\begin{eqnarray}&&-\frac{\partial J_1^*(v_0^*)}{\partial (v_0^*)_j} \nonumber \\ &=&-\frac{1}{2} f^T \left( \sum_{p=1}^N (v_0^*)_p B_p+A\right)^{-1}B_j \left( \sum_{p=1}^N (v_0^*)_p B_p+A\right)^{-1} f + \frac{(v_0^*)_j}{\gamma_j}- c_j\nonumber \\ &=& -\frac{1}{2} x_0^T B_j x_0+ \frac{(v_0^*)_j}{\gamma_j}- c_j.\end{eqnarray}

Now taking the  derivative of this expression relating $(v_0^*)_k$ we get
\begin{eqnarray}&&\left\{-\frac{\partial^2 J_1^*(v_0^*)}{\partial (v_0^*)_j \partial (v_0^*)_k}\right\}=  \left\{f^T \left( \sum_{j=1}^N (v_0^*)_p B_p+A\right)^{-1}B_j \left( \sum_{j=1}^N (v_0^*)_p B_p+A\right)^{-1} \right. \nonumber \\ && \left.\times B_k \left( \sum_{j=1}^N (v_0^*)_p B_p+A\right)^{-1} f + \frac{\delta_{jk}}{\gamma_j}\right\}.\end{eqnarray}
Since the matrices in question are symmetric, at a critical point  as specified in the last theorem, we obtain,
\begin{eqnarray}&&\left\{-\frac{\partial^2 J_1^*(\hat{v}_0^*)}{\partial (v_0^*)_j \partial (v_0^*)_k} \right\}\nonumber \\ &=& \left\{ x_0^T B_j\left( \sum_{p=1}^N (\hat{v}_0^*)_p B_p+A\right)^{-1}B_k  x_0 + \frac{\delta_{jk}}{\gamma_j}\right\}.\end{eqnarray}

On the other hand, for the functional $J(x)$ we obtain
\begin{equation}\delta^2 J(x_0)=A+ \hat{B}+\sum_{p=1}^N(\hat{v}_0^*)_pB_p \end{equation}

where $$\hat{B}=\hat{B}_{n\times n}=\{\hat{B}_{jk}\}=\left\{ \sum_{l=1}^N\sum_{s,q=1}^n \gamma_l\;(x_0)_s(B_l)_{js} (B_l)_{qk}(x_0)_q\right\}.$$

From this we may see that there exists a qualitative correspondence (in terms of positivity or negativity in a matrix sense)  between the two second derivative matrices only for the special case $n=N=1$. Even so we have to consider the sign of $\sum_{p=1}^N (\hat{v}_0^*)_p B_p+A$ to get a right conclusion.

For a general case such a correspondence may not hold even if $n=N.$

\end{remark}

\section{Conclusion}
In this article we have developed a duality principle for a class of non-convex optimization problems in $\mathbb{R}^n$.
For such a class of problems we address the case in which for the variables in question, $n \neq N.$

We believe to have obtained a very interesting way of developing the dual formulation, establishing a correct relation
between the critical points of the primal and dual problems, with no duality gap between such primal and dual formulations.

This problem has been addressed in similar form in \cite{ 17,9}, for example. It is not our objective here to
comment extensively such previous results, but just offer a new possibility of obtaining the dual formulations for such a class of  problems.

\end{document}